\newtheorem{theorem}{Theorem}
\newtheorem{definition}[theorem]{Definition}
\newtheorem{proposition}[theorem]{Proposition}
\newcommand{\R}{\mathbb{R}}
\newcommand{\norm}[1]{\left\lVert#1\right\rVert}
\newcommand{\UD}{\operatorname{UD}}
\renewcommand{\epsilon}{\varepsilon}
\title{Bounds on hyperbolic sphere packings: On a conjecture by Cohn and Zhao}
\author{Maximilian Wackenhuth}
\date{}
\begin{document}

\maketitle
\abstract{We prove sphere packing density bounds in hyperbolic space (and more generally irreducible symmetric spaces of noncompact type), which were conjectured by Cohn and Zhao and generalize Euclidean bounds by Cohn and Elkies. We work within the Bowen-Radin framework of packing density and replace the use of the Poisson summation formula in the proof of the Euclidean bound by Cohn and Elkies with an analogous formula arising from methods used in the theory of mathematical quasicrystals.}

		\section{Introduction}
Let $(X,d)$ be a metric space equipped with a Borel measure $\mathrm{vol}$. An $r$-sphere packing $P$ in $X$ is a collection of disjoint open balls of radius $r$. This article is concerned with the density $D(P)$ of ``nice'' sphere packings, where the definition of ``nice'' depends on the space $X$ under consideration. For a ``nice'' sphere packing $P$ the density $D(P)$ is given by
\begin{equation}\label{NaiveDensity}
	D(P) = \lim_{R\to \infty}\frac{\mathrm{vol}(B_R(x)\cap \bigcup P)}{\mathrm{vol}(B_R(x))},\end{equation}
where $x\in X$ is an auxiliary point. In Euclidean space, a sphere packing is called \emph{nice}, if the limit above exists for some point $x$, in which case it is independent of $x$. 

The generally accepted notion of \emph{nice} sphere packing in hyperbolics spaces was introduced by Bowen and Radin in \cite{Bowen2003} and \cite{Bowen2004}. It is based on ergodic theoretic considerations and excludes certain pathological examples (see \cite{Boroczky}). We will review the necessary definitions in Section \ref{DensityFramework}. For this introduction it suffices to know that for a nice hyperbolic sphere packing the density $D(P)$ is defined and independent of $x$. The following three problems have guided much of the research on sphere packing densities in Euclidean and hyperbolic spaces:
\begin{enumerate}[(i)]
	\item What is the maximal packing density $\triangle_{\R^n}(r):= \sup \{D(P)\mid P \text{ nice $r$-sphere packing}\}$?
	\item Does there always exists a nice $r$-sphere packing $P$ with $D(P) = \triangle_{\R^n}(r)$?
	\item Can we construct an explicit nice $r$-sphere packing $P$ with $D(P) = \triangle_{\R^n}(r)$?
\end{enumerate}
\subsection{Euclidean packings}
In Euclidean space Problem (ii) can be answered affirmatively, see for instance \cite{GroemerEuclidean}. Answers to Problems (i) and (iii) are known in dimension $2$ by the work of Fejes Tóth (see \cite{Fejes}), in dimension $3$ by the work of Hales (see \cite{Hales}), in dimension $8$ by the work of Viazovska (see \cite{ViazovskaDim8}) and in dimension $24$ by the work of Cohn, Kumar, Miller, Radchenko and Viazovska (see \cite{ViazovskaDim24}). The proofs of the results in dimensions $8$ and $24$ are based on the following upper bound on $\triangle_{\R^n}(r)$ obtained by Cohn and Elkies in \cite{CohnElkies2003}. 
\begin{theorem}[Cohn--Elkies]\label{euclideanLPbounds}
	Assume that $f:\R^n\to \R$ is continuous and rapidly decaying\footnote{For precise conditions see \cite{CohnElkies2003}.} and satisfies
	\begin{enumerate}[(i)]
		\item $f(x)\leq 0$ for $\norm{x}\geq 2r$,
		\item $\widehat{f}\geq 0$ and $\widehat{f}(0) > 0$, where $\widehat{f}$ is the Fourier transform of $f$.
	\end{enumerate}
	Then $\triangle_{\R^n}(r)\leq \mathrm{vol}(B_{r})\frac{f(0)}{\widehat{f}(0)}$.
\end{theorem}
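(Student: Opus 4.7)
The plan is to reduce to the case of periodic packings and then apply Poisson summation to the function $f$ in a way that couples the two hypotheses on $f$ and $\widehat{f}$.

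First I would argue that for the purpose of bounding $\triangle_n(r)$ it suffices to consider periodic packings, i.e.\ packings whose centers form a finite union of translates $x_1 + \Lambda, \dots, x_N + \Lambda$ of a full rank lattice $\Lambda \subset \R^n$. Indeed, it is a classical fact (see for instance Groemer's work cited for Problem (ii)) that the supremum of densities of periodic packings equals $\triangle_n(r)$, so no generality is lost. For such a periodic packing the density is $D(P) = N\,\mathrm{vol}(B_r)/\covol(\Lambda)$, and the goal reduces to showing $N/\covol(\Lambda)\leq f(0)/\widehat{f}(0)$.

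The key step is to apply the Poisson summation formula to the function $x \mapsto \sum_{i,j} f(x + x_i - x_j)$ summed over $\Lambda$. This gives the identity
\begin{equation*}
\sum_{v\in\Lambda}\sum_{i,j=1}^N f(v + x_i - x_j) \;=\; \frac{1}{\covol(\Lambda)}\sum_{y\in \Lambda^{\ast}} \widehat{f}(y)\,\Bigl|\sum_{i=1}^N e^{2\pi i \langle x_i, y\rangle}\Bigr|^2.
\end{equation*}
I would then bound both sides. For the left side, the packing condition forces $\norm{v + x_i - x_j}\geq 2r$ for every term except $v=0, i=j$, so hypothesis (i) gives an upper bound of $N f(0)$. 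For the right side, every term is nonnegative by hypothesis (ii), so dropping all but $y=0$ yields a lower bound of $\widehat{f}(0) N^2/\covol(\Lambda)$. Combining,
\begin{equation*}
\frac{N^2}{\covol(\Lambda)}\,\widehat{f}(0) \;\leq\; N f(0),
\end{equation*}
which rearranges to the desired inequality and hence to $D(P)\leq \mathrm{vol}(B_r) f(0)/\widehat{f}(0)$.

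The main technical obstacle is justifying Poisson summation in this setting: one needs sufficiently rapid decay of $f$ and $\widehat{f}$ to ensure absolute convergence of both sides after the double sum over $i,j$ is inserted; this is exactly why the theorem statement requires $f$ to be rapidly decaying (the precise conditions are deferred to \cite{CohnElkies2003}). The reduction to periodic packings is the other point requiring care, but it is a standard compactness/approximation argument in the Euclidean setting; as the rest of the paper will make clear, it is precisely this reduction and the use of Poisson summation that fail in the hyperbolic setting and must be replaced by the Bowen--Radin framework and quasicrystal-type summation formulas.
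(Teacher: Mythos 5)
Your proposal is correct and follows exactly the route the paper indicates for Theorem \ref{euclideanLPbounds}: reduce to periodic packings via the classical approximation fact, then apply Poisson summation to $\sum_{v\in\Lambda}\sum_{i,j}f(v+x_i-x_j)$, bounding the left side by $Nf(0)$ using (i) and the right side from below by the $y=0$ term using (ii). This is the standard Cohn--Elkies argument that the paper merely sketches in two sentences, so there is nothing to add.
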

Using Poisson summation they show the estimate above for periodic sphere packings. The general statement then follows from the classical fact that $\triangle_{\R^n}(r)$ can be approximated by densities of periodic sphere packings.

\subsection{Hyperbolic packings}
In hyperbolic space, Problem (ii) was answered affirmatively by Bowen and Radin.  Answers to Problems (i) and (iii) are known only in low dimensions for specific radii \cite{Bowen2003}. It is a notorious open problem whether the optimal packing density $\triangle_{\mathbb{H}^n}(r)$ can be approximated by densities of periodic packings for all $n$ and $r$. For $n=2$ (and arbitrary $r$) this was answered in the positive by Bowen \cite{BowenPeriodicityDim2}, who conjectured that his proof should generalize also to $n=3$, but this has not been carried out yet. In higher dimensions his methods do not apply. If one asks for the stronger property that $\triangle_{\mathbb{H}^n}(r)$ be realized by a periodic packing, then the answer becomes negative in general  \cite{Bowen2003}:		
\begin{theorem}[Bowen--Radin]\label{PeriodicityAndNonperiodicity}
	If $P$ is a nice $r$-sphere packing in $\mathbb{H}^n$ with $D(P) = \triangle_{\mathbb{H}^n}(r)$, then $P$ is not periodic, with the exception of at most countably many $r\geq 0$.
\end{theorem}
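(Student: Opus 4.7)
The plan is to produce a countable set $S \subseteq \R_{\geq 0}$ that contains every $r$ for which some nice periodic $r$-packing attains $\triangle_n(r)$. The starting observation is that a periodic $r$-packing $P$ is invariant under a cocompact lattice $\Gamma \leq \Iso(\mathbb{H}^n)$ and, writing $N \in \N$ for the number of $\Gamma$-orbits of sphere centers, has density
\[
D(P) = \frac{N \cdot \mathrm{vol}(B_r)}{\covol(\Gamma)}.
\]
Hence $D(P)$ depends on $P$ only through the pair $(N,\covol(\Gamma))$, so the set of possible densities of periodic packings is indexed by $\N \times \mathcal{V}$, where $\mathcal{V}$ is the set of covolumes of cocompact lattices in $\Iso(\mathbb{H}^n)$. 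The set $\mathcal{V}$ is countable: for $n=2$ by Gauss--Bonnet (covolumes are $-2\pi\chi$ for a rational orbifold Euler characteristic attached to a signature), for $n=3$ by the J{\o}rgensen--Thurston theorem, and for $n \geq 4$ by Wang's finiteness theorem.

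Combining these observations,
\[
S \subseteq \bigcup_{(N,v) \in \N \times \mathcal{V}} \left\{ r \geq 0 : \frac{\triangle_n(r)}{\mathrm{vol}(B_r)} = \frac{N}{v} \right\},
\]
and since $\N \times \mathcal{V}$ is countable it suffices to show that each fiber on the right is countable. For this I would prove that $\phi(r) := \triangle_n(r)/\mathrm{vol}(B_r)$ is strictly decreasing, which forces each fiber to be at most a singleton. Non-strict monotonicity is routine: shrinking the balls of any nice $r_2$-packing $P_2$ to radius $r_1<r_2$ yields a nice $r_1$-packing of density $D(P_2)\cdot\mathrm{vol}(B_{r_1})/\mathrm{vol}(B_{r_2})$, so $\phi(r_1)\geq \phi(r_2)$. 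For strict decrease I would exploit that the shrunken packing has centers pairwise separated by $\geq 2r_2 > 2r_1$, so there is room to insert additional $r_1$-balls; a greedy saturation procedure then produces an $r_1$-packing of strictly greater density than the shrunken one, uniformly in the starting packing.

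The main obstacle is making this strict-decrease step rigorous within the Bowen--Radin framework: the saturation procedure must produce a \emph{nice} packing (not one of the pathological examples excluded in \cite{Boroczky}), and the density gain must be quantified uniformly enough to survive the passage to the supremum defining $\triangle_n(r_1)$. A further subtlety is that naive midpoint insertion only succeeds when $r_2 \geq 2r_1$; for $r_2$ just above $r_1$ one instead needs a finer volume estimate controlling the measure of the forbidden region $\bigcup_i B_{2r_1}(x_i)$ around the shrunken packing's centers, ensuring that its complement has positive asymptotic density in $\mathbb{H}^n$.
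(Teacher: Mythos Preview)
The paper does not contain a proof of this theorem; it is quoted from \cite{Bowen2003} as background with no argument supplied, so there is nothing in the present paper against which to compare your proposal.

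On the merits of your outline: the reduction to (a) countability of the set of covolumes of cocompact lattices in $\Iso(\mathbb{H}^n)$ and (b) strict monotonicity of $\phi(r)=\triangle_n(r)/\mathrm{vol}(B_r)$ is correct, and your justification of (a) via Gauss--Bonnet, J{\o}rgensen--Thurston, and Wang is accurate. You have also correctly located the one non-routine step, namely (b): turning the heuristic ``after shrinking there is room to add balls'' into a strict inequality that survives the supremum defining $\triangle_n(r_1)$, and doing so for \emph{nice} packings. Note that the naive volume comparison you allude to does not by itself yield a contradiction when $r_1$ is close to $r_2$ (a $2r_2$-separated, $2r_1$-saturated set is not immediately ruled out by packing/covering density), so some additional idea is genuinely needed there. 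Within the framework of this paper the cleanest way to handle the niceness obstruction is to work at the level of $r$-IRPs via Theorem~\ref{triangleProb} rather than individual deterministic packings, so that one compares intensities directly; the remaining analytic work is then to show that any $r_2$-IRP can be modified to an $r_1$-IRP of strictly larger intensity.
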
		
If one restricts attention to periodic packings, then Theorem \ref{euclideanLPbounds} can be generalized to the hyperbolic case. Indeed, the following theorem was obtained by Cohn and Zhao \cite{CohnZhao} based on unpublished work by Cohn, Lurie and Sarnak.
\begin{theorem}[Cohn--Lurie--Sarnak, Cohn--Zhao]\label{PeriodicHyperbolicEstimate}
	Choose a base point $o\in \mathbb{H}^n$ and assume that $f:\mathbb{H}^n\to \R$ is radial, continuous and rapidly decaying\footnote{For precise conditions see \cite{CohnZhao}.} and satisfies 
	\begin{enumerate}[(i)]
		\item $f(x)\leq 0$ for $d(x,o)\geq 2r$,
		\item $\widehat{f}\geq 0$ and $\widehat{f}(\mathbf{1}) > 0$, where $\widehat{f}$ is the spherical transform of $f$.
	\end{enumerate}
	Then $D(P)\leq \mathrm{vol}(B_{r})\frac{f(o)}{\widehat{f}(\mathbf{1})}$ for any periodic $r$-sphere packing $P$ of \space $\mathbb{H}^n$.
\end{theorem}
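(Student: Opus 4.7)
My plan is to adapt the Cohn--Elkies argument by replacing the Poisson summation formula with the Selberg pre-trace formula for bi-$K$-invariant test functions on $\mathbb{H}^n$, where $K = \Stab(o) \subset \Iso(\mathbb{H}^n)$. By periodicity of $P$, there exist a cocompact lattice $\Gamma \subset \Iso(\mathbb{H}^n)$ and finitely many centers $x_1, \ldots, x_N$ in a fundamental domain of $\Gamma$ such that $P = \{B_r(\gamma x_j) : \gamma \in \Gamma,\, 1 \leq j \leq N\}$, and the density equals $D(P) = N\,\mathrm{vol}(B_r)/\covol(\Gamma)$.

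I would then form the double sum
\[ S := \sum_{i,j=1}^{N} \sum_{\gamma \in \Gamma} f\bigl(d(x_i, \gamma x_j)\bigr), \]
which makes sense since $f$ is radial, and bound it in two ways. For an \emph{upper} bound, the diagonal contribution ($i = j$ with $\gamma = e$) is exactly $N f(o)$; every remaining term involves two distinct packing centers at hyperbolic distance $\geq 2r$ and is therefore $\leq 0$ by hypothesis (i), so $S \leq N f(o)$. For a \emph{lower} bound, the pre-trace formula on $L^2(\Gamma \backslash \mathbb{H}^n)$ gives
\[ S = \sum_{\lambda} \widehat{f}(\lambda)\,\Bigl|\sum_{i=1}^{N} \phi_{\lambda}(x_i)\Bigr|^{2}, \]
where $\{\phi_\lambda\}$ is an orthonormal basis of joint Laplace eigenfunctions and $\widehat{f}(\lambda)$ is the spherical transform at the associated spectral parameter. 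The constant eigenfunction $\phi_{\mathbf{1}} = 1/\sqrt{\covol(\Gamma)}$ contributes $\widehat{f}(\mathbf{1})\,N^{2}/\covol(\Gamma) > 0$; all other terms are $\geq 0$ by hypothesis (ii). Hence $S \geq N^{2}\widehat{f}(\mathbf{1})/\covol(\Gamma)$, and combining the two bounds gives $N f(o) \geq N^{2}\widehat{f}(\mathbf{1})/\covol(\Gamma)$, which rearranges to $D(P) \leq \mathrm{vol}(B_r)\,f(o)/\widehat{f}(\mathbf{1})$.

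The main obstacle is the rigorous justification of the pre-trace formula under the hypotheses on $f$: one must verify absolute convergence of both the geometric sum over $\Gamma$ and the spectral sum over $\lambda$, and identify $\widehat{f}$ with the spherical transform appearing in the statement via Helgason--Harish-Chandra theory. Cocompactness of $\Gamma$ ensures that the $L^{2}$-spectrum is purely discrete, removing any continuous-spectrum contribution, but the precise decay conditions of \cite{CohnZhao} are needed to control the tails on both sides. A smaller technical point is that $\Gamma$ may have torsion; this can be handled by passing to a torsion-free finite-index subgroup, upon which $N$ and $\covol(\Gamma)$ scale by the same factor, preserving the density and the inequality.
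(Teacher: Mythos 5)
Your proposal is correct and follows exactly the route the paper indicates for this result: the paper does not reprove Theorem~\ref{PeriodicHyperbolicEstimate} but cites \cite{CohnZhao}, noting that the argument is the Cohn--Elkies proof with Poisson summation replaced by a pre-trace formula, which is precisely your double-counting of $S$ via the automorphic kernel on $L^2(\Gamma\backslash\mathbb{H}^n)$, with the torsion and convergence caveats appropriately flagged.
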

The proof is similar to that of Theorem \ref{euclideanLPbounds}, but replaces Poisson summation by a pre-trace formula. In dimension $2$, Theorem \ref{PeriodicHyperbolicEstimate} provides a bound on $\triangle_{\mathbb{H}^2}(r)$ for every $r$ via periodic approximation. Cohn and Zhao conjectured in \cite{CohnZhao} that the analogous bound should also hold in higher dimension. They also showed that this conjecture implies bounds on $\triangle_{\mathbb{H}^n}(r)$ which beat the hyperbolic Levenshtein-Kapatiansky bounds (see \cite{Levenstein}), just as in the Euclidean case. The purpose of this note is to announce a proof of the following version\footnote{Our regularity assumptions are slightly stronger than those conjectured by Cohn and Zhao.} of the Cohn-Zhao conjecture \cite{Wackenhuth}.
\begin{theorem}\label{hyperbolicLPbound}
	Let $S$ be an irreducible symmetric space of noncompact type (e.g. $S=\mathbb{H}^n$) and assume that $f:S\to \R$ is a radial, continuous and rapidly decaying function\footnote{The precise condition is that $f$ is a Harish-Chandra $L^1$-Schwartz function.} satisfying
	\begin{enumerate}[(i)]
		\item $f(x)\leq 0$ for $d(x, o)\geq 2r$,
		\item $\widehat{f}\geq 0$ and $\widehat{f}(\mathbf{1})>0$, where $\widehat{f}$ denotes the spherical transform of $f$.
	\end{enumerate}
	Then $\triangle_{\mathbb{H}^n}(r)\leq \mathrm{vol}(B_r)\frac{f(o)}{\widehat{f}(\mathbf{1})}$.
\end{theorem}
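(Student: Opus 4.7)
The plan is to bypass periodicity altogether and argue directly for an arbitrary nice packing $P$ of density $\triangle := D(P)$, replacing the pre-trace formula of Theorem \ref{PeriodicHyperbolicEstimate} by the diffraction formalism for invariant point processes. Let $G = \Iso(S)$ with $K \subset G$ the stabiliser of the base point $o$. The Bowen--Radin framework attaches to $P$ a $G$-invariant ergodic Borel probability measure $\mu$ on the space $\UD_{2r}(S)$ of $2r$-uniformly discrete subsets of $S$; this $\mu$ has first-moment intensity $\rho := \triangle/\mathrm{vol}(B_r)$.

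From $\mu$ I would construct the \emph{autocorrelation} $\eta_\mu$, a bi-$K$-invariant positive Radon measure on $S$, via the Palm--Campbell formula: for relatively compact $F \subset S$ of positive volume and radial continuous $f$ of compact support,
\begin{equation*}
  \int_S f\,d\eta_\mu \;=\; \frac{1}{\mathrm{vol}(F)}\,\bE_\mu\!\left[\sum_{x \in Q \cap F}\sum_{y \in Q} f\bigl(d(x,y)\bigr)\right].
\end{equation*}
Splitting off the diagonal and using uniform $2r$-discreteness of $\mu$-a.e.\ $Q$ yields the decomposition
\begin{equation*}
  \eta_\mu \;=\; \rho\,\delta_o \;+\; \eta_\mu^{\mathrm{off}},\qquad \mathrm{supp}\,\eta_\mu^{\mathrm{off}} \subset \{x \in S : d(x,o)\geq 2r\}.
\end{equation*}
Hypothesis (i) on $f$ and positivity of $\eta_\mu^{\mathrm{off}}$ immediately give the ``physical-side'' bound $\int f\,d\eta_\mu \leq \rho f(o)$.

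The heart of the argument is the spherical side. As a translation-bounded positive-definite bi-$K$-invariant Radon measure on $G$, the autocorrelation $\eta_\mu$ should admit a \emph{diffraction measure} $\widehat{\eta}_\mu$: a positive Radon measure on the tempered spherical dual of $S$ paired to $\eta_\mu$ by a Parseval identity
\begin{equation*}
  \int_S f\,d\eta_\mu \;=\; \int \widehat{f}(\varphi)\,d\widehat{\eta}_\mu(\varphi)
\end{equation*}
valid for every Harish-Chandra $L^1$-Schwartz function $f$. This identity is exactly the substitute for Poisson summation promised in the abstract; it can be produced by combining a Bochner--Godement theorem for $K$-biinvariant positive-definite measures with Harish-Chandra's spherical Plancherel formula, in the spirit of mathematical quasicrystal theory (Hof, Lenz--Moody). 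Ergodicity of $\mu$ produces a ``Bragg peak'' at the trivial spherical function, $\widehat{\eta}_\mu(\{\mathbf{1}\}) \geq \rho^2$. Hypothesis (ii) then gives $\int \widehat{f}\,d\widehat{\eta}_\mu \geq \rho^2 \widehat{f}(\mathbf{1})$, and chaining the three inequalities and dividing by $\rho$ produces $D(P)\leq \mathrm{vol}(B_r)f(o)/\widehat{f}(\mathbf{1})$; taking the supremum over nice packings finishes the proof.

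The main obstacle is constructing $\widehat{\eta}_\mu$ and the Parseval identity with enough flexibility to handle the Harish-Chandra $L^1$-Schwartz class. In the periodic case this reduces to spherical Plancherel on $\Gamma\backslash G$ and is essentially automatic; for a general ergodic Bowen--Radin measure, $\eta_\mu$ is merely translation-bounded, and passing from positive definiteness to a \emph{positive} spherical transform, while keeping precise control over the atom at $\mathbf{1}$, requires the full machinery of diffraction theory on noncompact symmetric spaces. Verifying $\widehat{\eta}_\mu(\{\mathbf{1}\}) \geq \rho^2$ — not merely ``spectral mass concentrated near $\mathbf{1}$'' — is the second delicate point and is precisely where ergodicity of the $G$-action on $\UD_{2r}(S)$ is used.
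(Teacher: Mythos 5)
Your proposal is essentially the paper's argument: the paper also works with the autocorrelation of a Bowen--Radin invariant point process, bounds it above by $i_\mu f(o)$ on the physical side using hypothesis (i) and $2r$-discreteness, and below by $i_\mu^2\widehat{f}(\mathbf{1})$ on the spherical side via a Plancherel/Parseval identity, then divides and takes a supremum. The only cosmetic difference is that the paper obtains your ``Bragg peak'' $\widehat{\eta}^+_\mu(\{\mathbf{1}\})\geq i_\mu^2$ not from ergodicity but from positive-definiteness of the reduced autocorrelation $\eta_\mu^+ - i_\mu^2 m_G$ (citing Bj\"orklund--Byl\'ehn), which lets it treat arbitrary, not necessarily ergodic, invariant random packings, and it handles the Harish-Chandra $L^1$-Schwartz class by first proving the identity for convolutions of compactly supported bi-$K$-invariant functions and then approximating.
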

Due to the unclear status of the periodic approximation problem we were naturally guided towards methods inspired by aperiodic order. Our proof of Theorem \ref{hyperbolicLPbound} follows the proofs of Theorem \ref{euclideanLPbounds} and Theorem \ref{PeriodicHyperbolicEstimate}, but instead of Poisson summation or a pre-trace formula we use a weak aperiodic analogue of a summation formula, inspired by methods from the field of mathematical quasicrystals (particularly methods developed in \cite{BHP3}). Note that recent results concerning \emph{lower} bounds on $\triangle_{\R^n}(r)$ and $\triangle_{\mathbb{H}^n}(r)$ also use aperiodic methods, see the work of  Campos, Jenssen, Michelen and Sahasrabudhe \cite{EuclideanLowerArXiv} and Fernández, Kim, Liu and Pikhurko \cite{HyperbolicLowerJournal}  respectively.

\subsection{Disclosure statement}

The authors do not work for, consult, own shares in or receive funding from any
company or organization that would benefit from this article, and have disclosed
no relevant affiliations beyond their academic appointment.

\section{Bowen-Radin packing density framework}\label{DensityFramework}
Before we sketch the proof of our main result, we first give a quick overview of the Bowen-Radin framework of packing density in the general case of irreducible symmetric spaces of noncompact type.
All results in this section were developed by Bowen and Radin in \cite{Bowen2003} and \cite{Bowen2004} for hyperbolic space and generalize directly to the case of an irreducible symmetric space of noncompact type \cite{HartnickWackenhuthDensity}.

Assume from now on that $G$ is a simple noncompact Lie group with finite center, fix a maximal compact subgroup $K\subset G$ and set $o:= eK\in S:= G/K$. Let $d$ denote the length-metric on the symmetric space $G/K$ and $\pi:G\to G/K$ the quotient map. Let $m_G$ denote the Haar measure on $G$, $m_K$ the Haar measure on $K$, normalized such that $m_K(K) = 1$, and $m_S$ the $G$-invariant Borel measure on $G/K$ such that the Weil desintegration formula holds for $m_G, m_K, m_S$. Note that $m_S$ is equal to the Riemannian volume measure $\mathrm{vol}$ which we used in the statement of Theorem \ref{hyperbolicLPbound}. 

We say that $A\subset G/K$ is $r$-uniformly discrete if $d(x,y)\geq 2r$ for all $x\neq y\in A$ and denote by $\UD_r(S)$ the set of all $r$-uniformly discrete subsets of $S$. Note that each element $P\in\UD_r(S)$ corresponds to the set of ball centers of an $r$-sphere packing, denoted $P^r$. We equip $\UD_r(S)$ with the Chabauty-Fell topology (see e.g. \cite{BHP1}). With this topology $\UD_r(S)$ is a compact Hausdorff space and the natural $G$-action on $\UD_r(S)$ is continuous. 
\begin{definition}
	An \emph{invariant random $r$-sphere packing} (in short $r$-IRP) is a $G$-invariant Borel probability measure $\mu$ on $\UD_r(S)$.
\end{definition}
If we set $\Omega := \UD_r(S)$ and $\mathbb{P} := \mu$, we can interpret an $r$-IRP as the distribution of an $\UD_r(S)$-valued random variable (or a ``hard sphere point process'' in probabilistic language) \[\Lambda_\mu: (\Omega, \mathbb{P})\to \UD_r(S),\ P\mapsto P.\]	
The packing density of such a point process is defined as 	
\[D(\Lambda_\mu) := \mathbb{P}(\#(\Lambda_\mu \cap B_r(o)) > 0) = \mathbb{E}[\#(\Lambda_\mu \cap B_r(o))].\]
We now turn to deterministic packings. Informally speaking, a nice packing is a generic instance of an ergodic IRP; the actual definition is as follows:
\begin{definition}
	Let $P\in \UD_r(S)$. The $r$-sphere packing $P^r$ in $S$ is called \emph{nice}, if there is an ergodic $r$-IRP $\mu$ such that
	\[\frac{1}{m_S(B_R(o))}\int_{\pi^{-1}(B_R(o))}g(h^{-1}P')dm_G(h)\to \int g(Q)d\mu(Q), \quad\text{as}\quad R\to\infty\] holds for the function $g:\UD_r(S)\to \R,\ \tilde{P}\mapsto \#(\tilde{P}\cap B_r(o))$ and every element $P'$ in the $G$-orbit of $P$ in $\UD_r(S)$.
\end{definition}
\begin{definition}\label{triangleProb}
	For all $r>0$ we define the \emph{optimal packing density} $\triangle_S(r)$ of $S$ as
	\[\triangle_S(r) := \sup\{D(P^r)\mid P^r\text{ is nice }\}.\]
\end{definition}  
The  pointwise ergodic theorem for semisimple Lie groups by Gorodnik and Nevo in \cite{GorodnikNevoLatticeSubgroups} implies that for any ergodic $r$-IRP $\mu$ and $\mu$-almost every $P\in \UD_r(S)$ the $r$-packing $P^r$ is nice. Moreover, for any such packing $P^r$ the limit defining $D(P^r)$ exists and coincides with $D(\Lambda_\mu)$, hence in particular is independent of the basepoint $x$ used to define it. In conjunction with the ergodic decomposition theorem one obtains
	$ \triangle_{S}(r)=\sup\{D(\Lambda_\mu)\mid \mu \text{ is an $r$-IRP }\}$ and thus $\triangle_S(r)$ as defined above agrees with the optimal density as defined by Bowen and Radin in the case $S=\mathbb{H}^n$.
\section{Replacing Poisson summation}
Our aim for this section will be a sketch of a proof of our main theorem.
For the proof we introduce the autocorrelation and reduced autocorrelation measure of an $r$-IRP, the main ingredients of our replacement for the Poisson summation formula. We will relate these to a number called the intensity of an $r$-IRP and use this to construct an upper bound on the packing density of an $r$-IRP.

\begin{definition}[Björklund--Byléhn, \cite{MichaelMattiasArXiv}]
	Let $\mu$ be an $r$-IRP. The \emph{autocorrelation measure} of $\mu$ is defined by
	\[\eta_\mu^+(f):= \int_{\UD_r(S)}\sum_{y\in P}\sum_{x\in P}f(\sigma(x)^{-1}\sigma(y))b(x)d\mu(x)\quad \text{for }f\in C_c(G),\]
	where $\sigma:S\to G$ is a Borel section and  $b: S\to \R$ is measurable, bounded with bounded support and $m_S(b)=1$.
\end{definition}
The proof of Lemma 2.8 in \cite{MichaelMattiasArXiv} shows that $\eta_\mu$ does not depend on our choice of $b$ or $\sigma$.
As $\mu$ is $G$-invariant, the map $B\mapsto \mathbb{E}[\#(\Lambda_\mu\cap B)]$ defines a $G$-invariant Borel measure and thus there is a constant $i_\mu\geq 0$ with $\mathbb{E}[\#(\Lambda_\mu\cap B)] = i_\mu m_S(B)$ for every Borel set $B\subset S$. The constant $i_\mu$ is called the \emph{intensity} of $\mu$. Note that \[D(\Lambda_\mu) = i_\mu m_S(B_r(o)).\] Thus finding an upper bound on $i_\mu$ will yield an upper bound on $D(\Lambda_\mu)$.

\begin{definition}
	The reduced autocorrelation $\eta_\mu$ of an $r$-IRP $\mu$ is defined by 
	\[\eta_\mu(f) := \eta_\mu^+(f) - i_\mu^2 m_G(f)\quad\text{for }f\in C_c(G).\]
\end{definition}
Lemma 2.8 in \cite{MichaelMattiasArXiv} now implies:
\begin{proposition}
	$\eta_\mu$ and  $\eta_\mu^+$ are positive-definite Borel measures on $G$. 
\end{proposition}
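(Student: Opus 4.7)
The plan is to verify positive-definiteness by computing $\eta_\mu^+(\phi^* * \phi)$ directly for $\phi \in C_c(G)$, recognizing the result as a squared modulus, and then deducing the analogous claim for $\eta_\mu$ by identifying the difference as a variance. Since Lemma~2.8 of \cite{BBArXiv} asserts that $\eta_\mu^+$ is independent of the choice of Borel section $\sigma$, a one-line change-of-section argument shows that $\eta_\mu^+$ is bi-$K$-invariant as a measure on $G$; since $m_G$ is manifestly bi-$K$-invariant, so is $\eta_\mu$, and it therefore suffices to verify the positivity inequalities on right-$K$-invariant test functions $\phi \in C_c(G)$.

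For such $\phi$ I would expand $(\phi^* * \phi)(g) = \int_G \overline{\phi(s)}\phi(sg)\,dm_G(s)$ (valid by unimodularity of $G$), substitute $g = \sigma(x)^{-1}\sigma(y)$ into the definition of $\eta_\mu^+$, and apply Fubini to move the integration in $s$ outside the integration over $P$. Next I would invoke the $G$-invariance of $\mu$ to translate $P \mapsto s^{-1}P$ in the inner integral: pushing $s$ through $\sigma$ produces a $K$-valued cocycle $\kappa(s,y)$ defined by $s\sigma(s^{-1}.y) = \sigma(y)\kappa(s,y)$, which is killed by right-$K$-invariance of $\phi$, while $b(x)$ is replaced by $b(s^{-1}.x)$. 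Evaluating $\int dm_G(s)\,b(s^{-1}.x) = m_S(b) = 1$ (by Weil's formula and unimodularity), the remaining integration in $s$ decouples cleanly and one obtains
\[
\eta_\mu^+(\phi^* * \phi) \;=\; \int_{\UD_r(S)}\Big|\sum_{y \in P}\phi(\sigma(y))\Big|^2 \,d\mu(P) \;\geq\; 0.
\]

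For $\eta_\mu$, observe that $m_G(\phi^* * \phi) = \big|\int_G \phi\,dm_G\big|^2$ by unimodularity, while the Campbell formula together with the defining property of the intensity gives $\int d\mu(P)\sum_{y \in P}\phi(\sigma(y)) = i_\mu \int_G \phi\,dm_G$ for right-$K$-invariant $\phi$. Subtracting yields
\[
\eta_\mu(\phi^* * \phi) \;=\; \bE_\mu\bigl[|S_\phi|^2\bigr] - \bigl|\bE_\mu[S_\phi]\bigr|^2, \qquad S_\phi(P) := \sum_{y \in P}\phi(\sigma(y)),
\]
which is the variance of $S_\phi$ under $\mu$ and hence nonnegative.

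The main technical obstacle is the careful bookkeeping required when invoking $G$-invariance to translate $P$ by $s^{-1}$: one must track the $K$-valued cocycle arising when $s$ is pushed through $\sigma$ and verify that it is genuinely eliminated by the right-$K$-invariance reduction together with Weil's decomposition of $dm_G$. A secondary (and essentially routine) point is integrability of $|S_\phi|^2$ against $\mu$, which follows from uniform $r$-discreteness of $P \in \UD_r(S)$: the latter provides a deterministic upper bound on $\#(P \cap K_0)$ for any compact $K_0 \subset S$, so $S_\phi$ is a finite sum with uniformly bounded cardinality on $\operatorname{supp}(\phi)$, making all applications of Fubini immediate.
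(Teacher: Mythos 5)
Your direct computation is, at its core, correct, and it is genuinely different from the paper, whose ``proof'' of this Proposition is nothing more than a citation of Lemma~2.8 in \cite{BBArXiv}. For right-$K$-invariant $\phi\in C_c(G)$ the value $\phi(s\sigma(y))$ depends only on the point $s.y\in S$, so the translation $P\mapsto s^{-1}P$ produces no cocycle at all; together with $\int_G b(s^{-1}.x)\,dm_G(s)=m_S(b)=1$ (Weil's formula and unimodularity) this gives $\eta_\mu^+(\phi^* * \phi)=\bE_\mu\bigl[|S_\phi|^2\bigr]$ with $S_\phi(P)=\sum_{y\in P}\phi(\sigma(y))$, and then $\eta_\mu(\phi^* * \phi)=\bE_\mu\bigl[|S_\phi|^2\bigr]-\bigl|\bE_\mu[S_\phi]\bigr|^2\geq 0$ via Campbell's formula; your integrability remarks (uniform discreteness bounds the sums deterministically) are also fine. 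Your reduction scheme would likewise be fine \emph{if} $\eta_\mu^+$ were known to be bi-$K$-invariant, since then $\eta_\mu^+(\phi^* * \phi)=\eta_\mu^+(\phi_K^* * \phi_K)$ with $\phi_K(g)=\int_K\phi(gk)\,dm_K(k)$.

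The genuine gap is precisely that bi-$K$-invariance claim. A change of Borel section $\sigma'(\cdot)=\sigma(\cdot)\kappa(\cdot)$ with $\kappa:S\to K$ replaces $\sigma(x)^{-1}\sigma(y)$ by $\kappa(x)^{-1}\sigma(x)^{-1}\sigma(y)\kappa(y)$: both arguments are twisted by the \emph{same} function $\kappa$ (and the diagonal terms $x=y$ are not moved at all), so section-independence can only yield invariance under such simultaneous twists, e.g.\ conjugation by $K$, never one-sided right or left translation by $K$. Indeed, with a fixed section the measure defined by the paper's formula need not be right-$K$-invariant: in the Euclidean analogue, taking the pure-translation section and $\mu$ a randomly rotated and translated lattice, $\eta_\mu^+$ is carried by the translation subgroup, which is not stable under right multiplication by $K$; the same mechanism obstructs bi-$K$-invariance for lattice-orbit IRPs with a section adapted to a fundamental domain in the symmetric-space setting. (This also shows that section-independence for \emph{all} of $C_c(G)$ cannot be taken at face value; it is automatic only on bi-$K$-invariant test functions, which is all the paper ever uses.) Consequently your argument establishes positive-definiteness of $\eta_\mu^+$ and $\eta_\mu$ only on the $*$-subalgebra of right-$K$-invariant (equivalently, after the averaging $\phi\mapsto\phi_K$, bi-$K$-invariant) test functions. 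That is exactly what the main proof needs, namely positivity of $\widehat{\eta}_\mu$ and $\widehat{\eta}_\mu^+$ on the spherical spectrum, but it is weaker than the Proposition as stated. To close the gap you should either work with the $K$-averaged periodization $\sum_{y\in P}\int_K\phi(\sigma(y)k)\,dm_K(k)$ (as in \cite{BBArXiv}), which makes $\eta_\mu^+$ bi-$K$-invariant by construction and lets your reduction and computation go through verbatim, or give a separate argument for arbitrary $\phi\in C_c(G)$ --- which the translation trick does not provide, because for general $\phi$ the cocycle is not killed.
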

\begin{proof}[Proof sketch for Theorem \ref{hyperbolicLPbound}]
	Assume that $F:G\to \R$ is a nice function which lifts a function $f:S\to \R$ satisfying the conditions of the theorem. Now let $\mu$ be an arbitrary $r$-IRP.
	As $\eta_\mu$ and $\eta_\mu^+$ are positive-definite, their spherical transforms are positive Borel measures on the spherical spectrum of $(G, K)$.
	Then the Plancherel theorem implies that 
	\begin{align}\label{e:estimate_1}
	\eta_\mu^+(F) = \widehat{\eta}_\mu^+(\widehat{F}) = \widehat{\eta}_\mu(\widehat{F}) + i_\mu^2\widehat{m_G}(\widehat{F}) \geq i_\mu^2\widehat{F}(\mathbf{1}),
	\end{align}
	where we have used (ii), the fact that $\widehat{\eta}_\mu$ is a positive measure and $\widehat{m_G} = \delta_{\mathbf{1}}$.
	We also have 
	\[
	\eta_\mu^+(F) = \int_{\UD_r(S)}\sum_{y\in P}\sum_{x\in P}F(\sigma(x)^{-1}\sigma(y))b(x)d\mu(x).
	\]
	Fix  $T>0$ such that $m_S(B_T(o)) = 1$ and set $b=\chi_{B_T(o)}$.
	Then
	\begin{align*}
		\eta_\mu^+(F) &= \int_{\UD_r(S)}\sum_{y\in P}\sum_{x\in P\cap B_T(o)}F(\sigma(x)^{-1}\sigma(y))d\mu(x)\\
		&\leq \int_{\UD_r(S)}\sum_{x\in P\cap B_T(o)}F(\sigma(x)^{-1}\sigma(x))d\mu(x)\\
		&= F(e)\mathbb{E}[\#(\Lambda_\mu\cap B_T(o))]\\
		&= F(e)i_\mu m_S(B_T(o)) = F(e)i_\mu,
	\end{align*}
	using property (i) for the inequality. Joining it with equation \eqref{e:estimate_1}, we obtain \[i_\mu \leq \frac{F(e)}{\widehat{F}(\mathbf{1})}=\frac{f(o)}{\widehat{f}(\mathbf{1})} \implies D(\Lambda_\mu) \leq m_S(B_r)\frac{f(o)}{\widehat{f}(\mathbf{1})}.\]
	Passing to the supremum over all $\mu$ then yields the desired bound by Theorem \ref{triangleProb}.
	
	The proof above can for example be performed if $F$ is the convolution of two bi-$K$-invariant compactly supported continuous functions. The extension to Harish-Chandra $L^1$-Schwartz functions is then achieved by approximation arguments.
\end{proof}
\bibliographystyle{plain}

\end{document}